\theoremstyle{definition}
\newtheorem{theorem}{Theorem}[section]
\newtheorem{proposition}[theorem]{Proposition}
\newtheorem{question}[theorem]{Question}
\newtheorem*{acknowledgments}{Acknowledgments}
\newcommand{\G}{\Gamma}
\newcommand{\acts}{\curvearrowright}
\begin{document}
\title[Borel Kernels]{Borel Kernels in Borel Directed Graphs}

\author[Ruijun Wang]{Ruijun Wang}
\address{School of Mathematical Sciences and School of pre-university, Dalian Minzu University}
\email{wangruijun@dlnu.edu.cn}
\thanks{}

\subjclass[2020]{Primary 03E15; Secondary 05C15, 05C20, 05C69}

\keywords{Borel chromatic number, quasi-kernel, Borel directed graph}

\date{}

\maketitle

\begin{abstract}
We prove that there is a Borel quasi-kernel in any locally countable Borel directed graph with finite Borel chromatic number. We prove that the Borel chromatic number of a Borel directed graph with bounded out-degree $n$ is either infinite or less than or equal to $\frac{(n+1)(n+2)}{2}$. This is an alternative proof of Palamourdas' theorem.
\end{abstract}

\section{Introduction}
In the seminal paper \cite{KST}, Kechris, Solecki and Todorcevic initiated the study of descriptive combinatorics of Borel graphs. In particular, they proved that the Borel chromatic number of a Borel graph with bounded degree $n$ is less than or equal to $n+1$. Subsequently, in \cite{Marks}, Marks studied combinatorics of free products of two marked groups, and showed that for each $2\leq m\leq n+1$ there is a $n$-regular acyclic Borel graph with Borel chromatic number $m$. This showed the bound $n+1$ is optimal.

Kechris, Solecki and Todorcevic also considerd the Borel graph generated by $n$ many countable-to-$1$ functions, which is equivalent to considering a locally countable Borel directed graph with bounded out-degree $n$. They proved that the Borel chormatic number is either infinite or less than or equal to $3^n$, and asked a question, see \cite[Question 4.9]{KST}, whether the bound $3^n$ can be improved to $2n+1$. This question is motivated by a theorem of De Bruijn and Erd\"os in finite graph theory, which says the chormatic number of a finite directed graph is less than or equal to $2n+1$ where $n$ is the maximum number of out-degree. For example, the directed Schreier graph $\vec F(2^{\mathbb F_n})$ has in-degree $n$ and out-degree $n$, in \cite{Marks}, Marks proved it has Borel chromatic number $2n+1$. Subsequently, in \cite{Palamourdas}, Palamourdas proved that the bound can be improved to $2n+1$ if the $n$ functions are commutative, and can be improved to $\frac{(n+1)(n+2)}{2}$ in general case, and answered the question when $n=2$, and almost answered the question when $n=3$ (with $8$ instead of $7$). In \cite{Meehan}, Meehan and Palamourdas followed the idea of Palamourdas' thesis work and proved that the bound can be improved to $\frac{(n+1)(n+2)}{2}-2$ when $n\geq 4$. For an overview of the entire field of descriptive combinatorics we refer the reader to the survey \cite{KM} by Kechris and Marks, in particular, Section 5.3 talks about this question.

In this paper, we show an alternative proof of Palamourdas' theorem, see \cite[Lemma 5.1]{Palamourdas}, using Theorem \ref{thm:1.2}.

\begin{theorem}\label{thm:1.1}
    Let $D$ be a locally countable Borel directed graph with bounded out-degree $n$, then the Borel chromatic number $\chi_B(\Tilde{D})=\infty$ or $\chi_B(\Tilde{D})\leq\frac{(n+1)(n+2)}{2}$ where $\Tilde{D}$ is the underlying graph of $D$.
\end{theorem}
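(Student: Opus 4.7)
The plan is induction on the out-degree bound $n$. The base case $n = 0$ is immediate: $D$ has no edges, so $\chi_B(\tilde D) = 1 = \frac{1 \cdot 2}{2}$. For the inductive step, I would assume the theorem for out-degree at most $n - 1$ and let $D$ be a locally countable Borel directed graph with out-degree at most $n$ and $\chi_B(\tilde D) < \infty$.

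The main idea is to iteratively extract Borel quasi-kernels using Theorem~\ref{thm:1.2} and to track how the out-degree of the residual directed graph drops by one at each iteration. Set $B_0 = V$. Given $B_{k-1}$, the restricted graph $D|_{B_{k-1}}$ is locally countable Borel with $\chi_B(\tilde D|_{B_{k-1}}) \le \chi_B(\tilde D) < \infty$, so Theorem~\ref{thm:1.2} supplies a Borel quasi-kernel $Q_k \subseteq B_{k-1}$. Let $A_k \subseteq B_{k-1} \setminus Q_k$ be the Borel set of vertices with an out-edge (in $D$) to $Q_k$, Borel by local countability, and set $B_k = B_{k-1} \setminus (Q_k \cup A_k)$.

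The key technical step I would prove, by induction on $k$, is that $D|_{B_k}$ has out-degree at most $n - k$. For $v \in B_k$, the induction hypothesis on $k$ gives at most $n - (k-1)$ out-edges of $v$ inside $B_{k-1}$; since $v \notin A_k$, none of these go to $Q_k$; and since $Q_k$ is a quasi-kernel of $D|_{B_{k-1}}$ and $v \notin Q_k$, some length-$\le 2$ path from $v$ reaches $Q_k$ inside $B_{k-1}$, forcing $v$ to have an out-edge into $A_k$. Restricting from $B_{k-1}$ to $B_k$ removes at least this out-edge, so the out-degree drops by one. Iterating, $D|_{B_n}$ has out-degree $0$, hence $B_n$ is $\tilde D$-independent; relabeling it $Q_{n+1}$, I obtain the Borel partition
\[
V = Q_1 \sqcup A_1 \sqcup Q_2 \sqcup A_2 \sqcup \cdots \sqcup Q_n \sqcup A_n \sqcup Q_{n+1}.
\]

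To finish, I assign each $Q_k$ its own color, consuming $n + 1$ colors. On the residual $W := V \setminus \bigcup_{k=1}^{n+1} Q_k = \bigcup_{k=1}^{n} A_k$, every $v \in A_k$ has at least one out-edge leaving $W$ (into $Q_k$), so $D|_W$ has out-degree at most $n - 1$. Because $\tilde D|_W$ inherits $\chi_B < \infty$ from $\tilde D$, the outer induction hypothesis yields a Borel proper coloring of $\tilde D|_W$ with at most $\frac{n(n+1)}{2}$ colors, disjoint from the first $n + 1$. The total is $(n+1) + \frac{n(n+1)}{2} = \frac{(n+1)(n+2)}{2}$.

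I expect the main obstacle to be the out-degree reduction lemma: it uses the length-$\le 2$ feature of quasi-kernels in an essential way (a bona fide kernel would give an even cleaner reduction, but kernels need not exist Borel-ly), and it requires verifying Borelness of each $A_k, B_k$ via local countability (a Luzin--Novikov style selection). Once the reduction is in hand, the bound $\frac{(n+1)(n+2)}{2} = \sum_{k=1}^{n+1} k$ falls out of the one-step recursion $f(n) = (n+1) + f(n-1)$ with $f(0) = 1$.
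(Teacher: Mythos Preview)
Your proposal is correct and essentially matches the paper's approach. The paper packages the iterated quasi-kernel extraction as a separate lemma (Theorem~\ref{Dominate}), producing a dominating Borel set $M$ with $\chi_B(\tilde D[M])\le n+1$, and then applies the outer induction to the out-degree $\le n-1$ complement; your $\bigcup_k Q_k$ plays exactly the role of $M$ and your $W$ is its complement, so the two arguments differ only in packaging (and in the trivial choice of base case $n=0$ versus $n=1$).
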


For a directed graph, a set of vertices $A$ is recurrent if $\rho(x,A)<\infty$ for all $x$ where $\rho$ is the directed distance $\rho(x,A)=\inf_{y\in A}\rho(x,y)$, and is recurrent with bounded gap $d$ if $\rho(x,A)\leq d$ for all $x$. An independent set of vertices is called a quasi-kernel (or semi-kernel) if it is recurrent with bounded gap $2$, and is called a kernel if it is recurrent with bounded gap $1$. In \cite{QuasiKernel}, Chv\'{a}tal and Lov\'{a}sz proved that there is a quasi-kernel for any finite directed graph. In \cite{Richardson}, Richardson proved that there is a kernel for any finite odd-dicycle-free graph. In \cite[Lemma 4.1]{CJMST20}, Conley, Jackson, Marks, Seward and Tucker-Drob proved that there is an independent and recurrent with bounded gap $3$ Borel set for the graph generated by a bounded-to-1 Borel function. In \cite{Higgins}, Higgins proved that there is an independent and recurrent with bounded gap $\chi_B(\Tilde{D})+1$ Borel set for any locally countable Borel directed graph with finite Borel chromatic number $\chi_B(\Tilde{D})<\infty$. In this paper, we prove that there is a quasi-kernel for any locally countable Borel directed graph with finite Borel chromatic number.

\begin{theorem}\label{thm:1.2}
    Let $D$ be a locally countable Borel directed graph with finite Borel chromatic number $\chi_B(\Tilde{D})<\infty$, then there is a Borel quasi-kernel in $D$.
\end{theorem}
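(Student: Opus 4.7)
The plan is to prove Theorem \ref{thm:1.2} by induction on $k = \chi_B(\tilde{D})$, using a Borel proper coloring $c\colon V \to \{0,\dots,k-1\}$ that partitions $V$ into Borel $\tilde{D}$-independent sets $V_0,\dots,V_{k-1}$.

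The base case $k=1$ is immediate: $\tilde{D}$, hence $D$, has no edges, so $A = V$ is a Borel quasi-kernel. I would next treat $k=2$ explicitly, as it serves as the template for the inductive step. In the bipartite case, let $S_1 \subseteq V_1$ be the Borel set of $D$-sinks lying in $V_1$ and put $A := (V_0 \setminus N^-(S_1)) \cup S_1$. Independence holds since any would-be edge $u \to s$ with $u \in V_0 \setminus N^-(S_1)$ and $s \in S_1$ is excluded by the definition of $N^-(S_1)$, while vertices of $S_1$ have no out-edges at all. Domination at directed distance $\leq 2$: every $V_0$-vertex removed from $A$ points into $S_1 \subseteq A$, and every non-sink $V_1$-vertex has an out-neighbor in $V_0$ which either lies in $A$ or points into $S_1$. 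Borel-ness follows from Borel-ness of $V_0$, $V_1$, and the edge relation together with Luzin--Novikov, which makes $N^-(S_1)$ Borel under local countability.

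For the inductive step, assume the result for Borel digraphs of Borel chromatic number less than $k$, and apply it to $D \restriction V \setminus V_{k-1}$ to obtain a Borel quasi-kernel $A'$ of that subdigraph. The task is to extend $A'$ across $V_{k-1}$. For an uncovered vertex $v \in V_{k-1}$ (i.e., $\rho_D(v, A') > 2$), independence of $V_{k-1}$ in $\tilde{D}$ forces all out-neighbors of $v$ to lie in $V \setminus V_{k-1}$, where they have $\rho_D(\cdot, A') \leq 2$; thus $v$ is either a sink of $D$ or has every out-neighbor at directed distance exactly $2$ from $A'$. Such $v$ must be added to $A$, which in turn may require swapping out some in-neighbor $u \in A'$ of $v$, since otherwise the edge $u \to v$ would violate independence.

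The main obstacle is cascading: removing $u$ from $A'$ can demote some third vertex $w$, whose only length-$\leq 2$ witness path to $A'$ used $u$, to $\rho(w, A) = 3$. To contain this I plan to strengthen the inductive hypothesis by requiring that the Borel quasi-kernel come equipped with a Borel witness map assigning to every vertex a concrete out-path of length at most $2$ ending in the quasi-kernel, and to verify that the bipartite construction above admits such a witness map stable under the swaps performed in the inductive step. Local countability via Luzin--Novikov allows the required witness-path selections and set-theoretic operations to be carried out in a Borel way, and since the induction is over the natural number $k$ termination is automatic, yielding a Borel quasi-kernel of $D$.
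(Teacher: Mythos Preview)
Your induction framework and the $k\le 2$ cases are fine (and in fact your $k=2$ construction coincides with the paper's, specialized to two colors). The gap is in the inductive step: you apply the inductive hypothesis to $D\restriction(V\setminus V_{k-1})$ and then try to patch in the uncovered sinks of $V_{k-1}$, removing conflicting members of $A'$. You correctly identify the cascading problem, but the proposed fix (a Borel ``witness map stable under swaps'') is not a solution. Concretely: take $x\in V\setminus V_{k-1}$ with $\rho(x,A')=2$, and suppose \emph{every} length-$2$ path from $x$ to $A'$ ends at some $u$ that must be removed because $u\to v$ for some newly added $v\in V_{k-1}$. Then $\rho(x,M)\ge 3$ regardless of which witness path you recorded; a witness map cannot repair a vertex all of whose witnesses are destroyed. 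Nothing in the setup rules this configuration out, so the step as written does not go through.

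The paper avoids the cascade entirely by inducting on a different set. Fix one color class $A=c^{-1}(0)$ and let
\[
T=\{x\notin A:\ \text{there is no }y\in A\text{ with }x\to y\}.
\]
Then $\chi_B(\tilde D[T])\le k$, so by induction $D[T]$ has a Borel quasi-kernel $T'$. Put $A'=\{x\in A:\exists\,y\in T'\ x\to y\}$ and $M=(A\setminus A')\cup T'$. Independence is immediate: no $a\in A\setminus A'$ points into $T'$ by the definition of $A'$, and no $t\in T'\subseteq T$ points into $A$ by the definition of $T$. For recurrence, every $x\notin T$ with $x\notin A$ already has an out-neighbor $y\in A$, and either $y\in M$ or $y\in A'$ so $y\to T'\subseteq M$; every $x\in A'$ points into $T'$; and every $x\in T\setminus T'$ has $\rho_{D[T]}(x,T')\le 2$. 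No swaps, no cascade. The key idea you are missing is to apply the induction to the ``bad'' set $T$ rather than to the full complement $V\setminus A$.
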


The rest of the paper is organized as follows. In Section~\ref{sec:2} we fix the notation to be used throughout the paper. In Section~\ref{sec:3} we prove Theorem \ref{thm:1.2}. Then in Section~\ref{sec:4} we prove Theorem \ref{thm:1.1}. Finally, in Section~\ref{sec:5} we ask some questions.


\section{Preliminaries\label{sec:2}}
In this section we define our basic notions and fix notation. We use standard concepts and terminology from graph theory, which can be found in \cite{book}.

A {\em directed graph} (or {\em digraph}) $D$ is a pair $(V(D),A(D))$, where $V(D)$ is a set and $A(D)\subseteq V(D)\times V(D)$ is a non-reflexive binary relation on $V(D)$, i.e., $(x,x)\notin A(D)$ for all $x\in V(D)$. Here $V(D)$ is the set of {\em vertices} of $D$, and $A(D)$ is the set of {\em arcs} of $D$. If $a=(x,y)\in A(D)$, we also denote an arc $a=(x,y)$ by $x\to y$ and say that $y$ is an {\em out-neighbor} of $x$ and $x$ is an {\em in-neighbor} of $y$. For $x\in V(D)$, the {\em out-degree} is the number of out-neighbors of $x$, similarly, we define {\em in-degree}.

If $x,y\in V(D)$ are distinct, then a {\em directed path} $p$ from $x$ to $y$ is a sequence $x_0, x_1,\dots, x_k$ of distinct vertices of $D$ such that $x_0=x$, $x_k=y$, and for all $0\leq i<k$, $(x_i,x_{i+1})\in A(D)$; here $k$ is the {\em length} of the directed path $p$. The {\em directed distance} from $x$ to $y$ is the minimum of length of directed paths from $x$ to $y$.

For a directed graph $D$, the {\em chromatic number} is the chromatic number of its underlying graph $\Tilde{D}$ where $V(\Tilde{D})=V(D)$ and $(x,y)\in A(\Tilde{D})\Longleftrightarrow (x,y)\in A(D)\text{ or }(y,x)\in A(D)$. When $V(D)$ is a standard Borel space, and $D$ is a Borel subset of $V(D)\times V(D)$, we say that $D$ is a Borel directed graph, we can define {\em Borel} proper colorings of $G$, and define its {\em Borel chromatic number}, denoted as $\chi_B(\Tilde{D})$.

For a directed graph $D$, a set of vertices $A$ is {\em independent} if $A$ is independent in its underlying graph $\Tilde{D}$. A set of vertices $A$ is {\em recurrent} if $\rho(x,A)<\infty$ for all $x$ where $\rho$ is the directed distance $\rho(x,A)=\inf_{y\in A}\rho(x,y)$, and is {\em recurrent with bounded gap $d$} if $\rho(x,A)\leq d$ for all $x$. An independent set of vertices is called a {\em quasi-kernel} (or {\em semi-kernel}) if it is recurrent with bounded gap $2$, and is called a {\em kernel} if it is recurrent with bounded gap $1$. When $D$ is a Borel directed graph, we may consider {\em Borel quasi-kernel} and {\em Borel kernel}.

Let $X$ be a set and let $f_1,\cdots,f_n:X\to X$ be functions, we define the {\em directed graph $D=D_{f_1,\cdots,f_n}$ generated by functions}. $V(D)=X$ and $$(x,y)\in A(D)\Longleftrightarrow \exists\ 1\leq i\leq n\ y=f_i(x)\text{ and }x\neq y.$$ The {\em graph $G$ generated by functions} is the underlying graph $G=\Tilde{D}$. By Borel uniformization, see \cite[Theorem 18.10]{Kechris95}, $D$ is a Borel directed graph generated by $n$ many countable-to-1 functions if and only if $D$ is locally countable Borel directed graph with bounded out-degree $n$.

An important class of Borel graphs consists of the graphs induced by actions of finitely generated groups. A {\em marked group} is a pair $(\Gamma, S)$, where $\Gamma$ is a group and $S$ is a finite generating set of $\Gamma$. Usually we also require $1_\Gamma\not\in S$, and in this paper we do not assume $S$ is symmetric. 

When there is an action of a marked group $\Gamma$ on a set $X$, the {\em directed Schreier graph} of the action $\Gamma\acts X$ on $X$, denoted $\vec D(\Gamma, S,X)$, is defined by
$$ V(\vec D(\Gamma, S,X))=X $$
and
$$ A(\vec D(\Gamma, S,X))=\{(x,y)\in X^2\,:\, \exists s\in S\ y=s\cdot x\}. $$

The Schreier graph will be particularly nice when the action is free. Let $F(\Gamma, X)=\{ x \in X\colon  \forall g \neq 1_\G \ (g\cdot x \neq x)\}$ be the {\em free part} of the action $\Gamma\curvearrowright X$. $F(\Gamma, X)$ is an invariant set and the induced action of $\Gamma$ on $F(\Gamma, X)$ is free.  We denote by $\vec F(\Gamma,S,X)$ the directed Schreier graph on $F(\Gamma, X)$. If the group $\Gamma$ and its action on $X$ are understood and the generating set $S$ is standard or otherwise understood, we simply write $\vec F(X)$ for the directed Schreier graph $\vec F(\Gamma,S, X)$.

The Bernoulli shift action induces a natural Schreier graph. Let $A$ be a finite set with the discrete topology. Assume $|A|\geq 2$. For a countable discrete group $\G$, the space $A^\G$ is equipped with the usual product topology and is homeomorphic to the Cantor space $2^\mathbb N=\{0,1\}^{\mathbb{N}}$, which is compact Polish.

The {\em Bernoulli shift action} $\G\curvearrowright A^\G$ is defined by
\[
(g \cdot x)(h)= x(hg)
\]
for $g,h \in \G$ and $x \in A^\G$. 
This action is continuous. The free part $F(\Gamma, A^\Gamma)$ is an invariant dense $G_\delta$ subset of $A^\Gamma$, hence is a Polish space. 

\section{Proof of Theorem \ref{thm:1.2}}\label{sec:3}

For the readers' convenience, we state Theorem \ref{thm:1.2} again.

\begin{theorem}
    Let $D$ be a locally countable Borel directed graph with finite Borel chromatic number $\chi_B(\Tilde{D})<\infty$, then there is a Borel quasi-kernel in $D$, i.e., there is a Borel set of vertices $M$ such that
    \begin{enumerate}[(1)]
        \item $M$ is independent;
        \item $\rho(x,M)\leq 2$ for all vertices $x$ where $\rho$ is the directed distance.
    \end{enumerate}
\end{theorem}
\begin{proof}
    We prove by induction on $\chi_B(\Tilde{D})$, when $\chi_B(\Tilde{D})=1$, take $M=V(D)$. Suppose the theorem holds for $\chi_B(\Tilde{D})\leq k$, now assume that $\chi_B(\Tilde{D})=k+1$, fix a Borel proper $k+1$-coloring $c$.

    Let $A=c^{-1}(\{0\})$ be the vertices colored by $0$. Let $T=\{x\notin A:\ \text{there is no }y\in A\ x\to y\}$. $T$ could be empty, in which case, take $M=A$. Assume now $T$ is not empty, $T$ is Borel because $D$ is locally countable. Consider the induced Borel subgraph $D[T]$, it has a Borel proper $k$-coloring, by inductive hypothesis applied to $D[T]$, there is a Borel recurrent set $T'\subseteq T$ such that $\rho(x,T)\leq 2$ for all $x\in T$ (where $\rho$ is the directed distance in $D[T]$) and $T'$ is independent in $\Tilde{D}[T]$ and thus independent in $\Tilde{D}$.

    Let $A'=\{x\in A:\ \text{there is }y\in T'\ \text{that }x\to y\}$. $A'$ is Borel because $D$ is locally countable. $A'$ could be empty, but it does not make any difference. Take $M=(A\backslash A')\cup T'$.

    To see that $M$ is independent in $\Tilde{D}$, assume towards a contradiction that there are $x,y\in M\ \text{that }x\Tilde{D}y$. Because of independence of $A$ and $T'$, without loss of generality, we assume that $x\in A\backslash A'$ and $y\in T'$. By the definition of $A'$, there is no arc from $x$ to $y$. By the definition of $T$, there is no arc from $y$ to $x$.

    We claim that $\rho (x,M)\leq 2$ for all $x$. If $x\in M$, then we are done. Now let $x\notin A\backslash A'$ and $x\notin T'$.

    (Case 1) Suppose $x\notin T$.

    \ \ (Case 1.1) If $x\in A'$, then there is an arc from $x$ to $T'\subseteq M$.

    \ \ (Case 1.2) If $x\notin A$, then there is $y\in A$ such that there is an arc from $x$ to $y$ by the definition of $T$,

    \ \ \ \ (Case 1.2.1) if $y\notin A'$ then $y\in M$;

    \ \ \ \ (Case 1.2.2) if $y\in A'$ then there is an arc from $y$ to $T'\subseteq M$.

    (Case 2) Suppose $x\in T$, then $x\in T\backslash T'$, by the inductive hypothesis $\rho(x,T')\leq 2$, so $\rho(x,M)\leq 2$.

     This completes the induction.
\end{proof}

In the above theorem, the assumption $\chi_B(\Tilde{D})<\infty$ cannot be removed. Galvin and Prikry showed there is a locally finite Borel directed graph generated by a Borel function with infinite Borel chromatic number, see \cite[Theorem 19.11]{Kechris95}. Let $V=[\omega]^\omega$ be the infinite sets on $\omega$, define $f(x)=x\backslash\min\{x\}$, then the Borel graph $G_f$ has infinite Borel chromatic number, and has no independent and recurrent Borel set.

\begin{proposition}\label{SingleFunction}
    Let $D$ be the Borel directed graph generated by a countable-to-$1$ Borel function, then the following are equivalent:
    \begin{enumerate}[(1)]
        \item $\chi_B(\Tilde{D})<\infty$;
        \item $\chi_B(\Tilde{D})\leq 3$;
        \item $\text{asdim}_B(\Tilde{D})<\infty$;
        \item $\text{asdim}_B(\Tilde{D})\leq 1$;
        \item there is an independent and recurrent with bounded gap Borel set;
        \item there is an independent and recurrent with bounded gap $2$ Borel set;
        \item there is an independent and recurrent Borel set.
    \end{enumerate}
\end{proposition}
\begin{proof}
    $(2)\Longrightarrow (1)$, $(4)\Longrightarrow(3)$, $(6)\Longrightarrow (5)\Longrightarrow(7)$ are trivial.
    
    $(1)\Longrightarrow(6)$ is by Theorem \ref{thm:1.2}.
    
    For $(5)\Longrightarrow(4)$, it is indicated by the proof of \cite[Lemma 8.3]{CJMST23}, one can replace \cite[Lemma 4.1]{CJMST20} by Theorem \ref{thm:1.2} to get a sightly stronger theorem.
    
    For $(3)\Longrightarrow (2)$, the acyclic part is $2$-colorable, by \cite[Corollary 8.2]{CJMST23}, it is Borel $3$-colorable; the cyclic part is smooth, note that there is only one dicycle in a connected component, one can choose a Borel complete section of the cyclic part and then color it by three colors.

    For $(7)\Longrightarrow (2)$,  we color the independent and recurrent set $A$ by color $0$, and color its in-neighborhood $N^-(A)$ by color $1$, and color their in-neighborhood $N^-(A\cup N^-(A))$ by color $2$, we use color $1$ and color $2$ alternatively, until all vertice are colored.
\end{proof}

\section{Proof of Theorem \ref{thm:1.1}}\label{sec:4}

\begin{theorem}\label{Dominate}
    Let $D$ be a locally countable Borel directed graph with finite Borel chromatic number $\chi_B(\Tilde{D})<\infty$ and with bounded out degree $n$, then there is a Borel set $M$ such that $\chi_B(\Tilde{D}[M])\leq n+1$ and $\rho(x,M)\leq 1$ for all $x$ where $\rho$ is the directed distance on $D$.
\end{theorem}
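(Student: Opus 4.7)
The plan is to prove Theorem~\ref{Dominate} by induction on $\chi_B(\tilde{D})$, with the out-degree bound $n$ held fixed; the base case $\chi_B(\tilde{D}) = 1$ is immediate with $M = V(D)$. For the inductive step with $\chi_B(\tilde{D}) = k+1$, I will imitate the skeleton of the proof of Theorem~\ref{thm:1.2}: fix a Borel proper $(k+1)$-coloring $c$, put $A = c^{-1}(\{0\})$, and let $T = \{x \notin A : \text{no } y \in A \text{ satisfies } x \to y\}$, which is Borel because $D$ is locally countable. If $T = \emptyset$ then $M = A$ already works. Otherwise $\tilde{D}[T]$ has Borel chromatic number at most $k$, so the inductive hypothesis (applied to $D[T]$, which still has out-degree at most $n$) supplies a Borel $M_T \subseteq T$ dominating in $D[T]$ together with a Borel $(n+1)$-coloring $c_T$ of $\tilde{D}[M_T]$.

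I then set $M = A \cup M_T$. Domination is straightforward: vertices in $T \setminus M_T$ are dominated via $M_T$ by choice of $M_T$; vertices in $V(D) \setminus (A \cup T)$ are dominated via $A$ by the definition of $T$; and vertices in $A \cup M_T$ lie in $M$. The key structural observation for the chromatic bound is that every $\tilde{D}$-edge between $A$ and $M_T$ must be an arc pointing from $A$ into $M_T$, since vertices of $M_T \subseteq T$ have no out-neighbors in $A$.

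This directional structure lets me extend $c_T$ to $M$ greedily. For each $a \in A$, define $S(a) = \{c_T(y) : y \in M_T,\ a \to y\}$; the out-degree bound gives $|S(a)| \leq n$, so $\{0, \ldots, n\} \setminus S(a)$ is nonempty, and I set $c_M(a) = \min(\{0, \ldots, n\} \setminus S(a))$ and $c_M|_{M_T} = c_T$. Local countability of $D$ makes $a \mapsto S(a)$ Borel, hence $c_M$ is Borel, and properness is immediate: within $A$ there are no edges, within $M_T$ the coloring $c_T$ is proper, and an edge $a \to y$ with $a \in A$, $y \in M_T$ satisfies $c_T(y) \in S(a) \not\ni c_M(a)$.

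The main subtlety, compared to the quasi-kernel construction in the proof of Theorem~\ref{thm:1.2}, is that I cannot delete the subset of $A$ whose vertices dominate into $M_T$ (as was done there to secure independence), because doing so would sever the distance-$1$ domination of vertices outside $A \cup T$. Keeping all of $A$ introduces $A$-to-$M_T$ arcs into $\tilde{D}[M]$, and the purpose of the greedy extension is precisely to absorb these at most $n$ constraints per vertex of $A$ into the $(n+1)$-color palette without spending an extra color.
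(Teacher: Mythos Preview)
Your proof is correct, but it takes a genuinely different route from the paper's.

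The paper inducts on the out-degree bound $n$ and invokes Theorem~\ref{thm:1.2} at every step: one takes a Borel quasi-kernel $M'$, notes that the set $A=\{x:\rho(x,M')=2\}$ has out-degree at most $n-1$ in $D[A]$ (each vertex there loses an out-arc into $M'\cup N^{-}(M')$), applies the inductive hypothesis to $D[A]$ to get $M_0\subseteq A$ with $\chi_B(\tilde D[M_0])\le n$, and sets $M=M'\cup M_0$; since $M'$ is independent, one extra color suffices and $\chi_B(\tilde D[M])\le n+1$.

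Your argument instead inducts on $\chi_B(\tilde D)$ with $n$ fixed, reuses only the \emph{construction} from the proof of Theorem~\ref{thm:1.2} (the color class $A$ and the undominated set $T$), and replaces the excision of $A'$ by a greedy color extension that exploits the one-sidedness of the $A$--$M_T$ edges together with the out-degree bound. This is a more elementary, self-contained route: it does not use Theorem~\ref{thm:1.2} as a black box at all, and the inductive step is a single local coloring argument rather than an appeal to the existence of a quasi-kernel. The paper's approach, on the other hand, highlights Theorem~\ref{thm:1.2} as the engine behind Theorem~\ref{Dominate}, which is in keeping with the paper's narrative that the quasi-kernel theorem drives the chromatic bound of Theorem~\ref{thm:1.1}.
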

\begin{proof}
    We prove by induction on $n$. When $n=1$, by Theorem \ref{thm:1.2}, there is an independent Borel set $M'$ such that $\rho(x,M')\leq 2$. Note that the in-neighborhood $N^-(M')=\{x:\rho(x,M')=1\}$ is an independent Borel set, so $M=M'\cup N^-(M')$ is as required.

    Suppose the theorem holds for $n-1$, when $D$ has bounded out-degree $n$, by Theorem \ref{thm:1.2}, there is an independent Borel set $M'$ such that $\rho(x,M')\leq 2$. Let $A=\{x:\rho(x,M')=2\}=V(D)\backslash(M'\cup N^{-1}(M'))$ where $N^{-1}(M')$ is the in-neighborhood of $M'$. Consider the induced subgraph $D[A]$, it has bounded out-degree $n-1$, since each vertex in $A$ has an out-going arc to $N^{-1}(M)$, by inductive hypothesis applied to $A$, there is a Borel set $M_0$ with $\chi_B(\Tilde{D}[M_0])\leq n$ such that $\rho(x,M_0)\leq 1$ for all $x\in A$. Then $M=M'\cup M_0$ is as required. This completes the induction.
\end{proof}

Now we are ready to prove Theorem \ref{thm:1.1}. For the readers' convenience, we state Theorem \ref{thm:1.1} again.

\begin{theorem}
    Let $D$ be a locally countable Borel directed graph with bounded out-degree $n$, then the Borel chromatic number $\chi_B(\Tilde{D})=\infty$ or $\chi_B(\Tilde{D})\leq\frac{(n+1)(n+2)}{2}$ where $\Tilde{D}$ is the underlying graph of $D$.
\end{theorem}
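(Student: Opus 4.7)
The plan is to prove Theorem \ref{thm:1.1} by induction on the out-degree bound $n$, using Theorem \ref{Dominate} as the main engine. The base case $n=0$ is immediate, because then $D$ has no arcs at all and $\chi_B(\tilde D)\leq 1=\frac{1\cdot 2}{2}$. We may also assume throughout that $\chi_B(\tilde D)<\infty$, since otherwise the conclusion holds vacuously.

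For the inductive step, assume the statement is known for out-degree $n-1$, and let $D$ have bounded out-degree $n$ with $\chi_B(\tilde D)<\infty$. I would apply Theorem \ref{Dominate} to produce a Borel set $M\subseteq V(D)$ with $\chi_B(\tilde D[M])\leq n+1$ and $\rho(x,M)\leq 1$ for every vertex $x$. The key observation is that any $x\in V(D)\setminus M$ has $\rho(x,M)=1$, so at least one of the (at most $n$) out-arcs of $x$ lands in $M$. Consequently, the induced Borel directed graph $D[V(D)\setminus M]$ has bounded out-degree at most $n-1$, and its underlying graph still has finite Borel chromatic number (induced subgraphs never raise $\chi_B$). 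The inductive hypothesis therefore gives a Borel proper coloring of $\tilde D[V(D)\setminus M]$ using at most $\frac{n(n+1)}{2}$ colors.

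Combining the two colorings on disjoint palettes yields a Borel proper coloring of $\tilde D$ with
\[
(n+1)+\frac{n(n+1)}{2}=\frac{(n+1)(n+2)}{2}
\]
colors, closing the induction. I do not expect a substantive obstacle here: Theorem \ref{Dominate} already absorbs the hard descriptive-combinatorial work of producing a dominating Borel set whose induced subgraph is $(n+1)$-colorable, and the induction merely peels off one layer at a time, shedding one unit of out-degree and using $n+1$ fresh colors at each step, which telescopes to the sum $1+2+\cdots+(n+1)=\frac{(n+1)(n+2)}{2}$. The only minor point to verify is the Borelness of $V(D)\setminus M$ and the out-degree reduction on it, both of which are immediate from $M$ being Borel and from $\rho(x,M)\leq 1$.
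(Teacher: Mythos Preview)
Your proposal is correct and follows essentially the same approach as the paper: induct on $n$, apply Theorem~\ref{Dominate} to peel off a dominating set $M$ with $\chi_B(\tilde D[M])\le n+1$, observe that the complement has out-degree at most $n-1$, and combine the colorings. The only cosmetic differences are that the paper starts the induction at $n=1$ (invoking Proposition~\ref{SingleFunction}) rather than $n=0$, and that you are slightly more explicit about carrying the hypothesis $\chi_B(\tilde D)<\infty$ through to the induced subgraph so that the inductive hypothesis yields the finite bound.
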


\begin{proof}
    We prove by induction on $n$. When $n=1$, it is proved by Proposition \ref{SingleFunction}. 
    
    Suppose the theorem holds for $n-1$, when $D$ has bounded out-degree $n$, by Theorem \ref{Dominate}, there is a Borel set $M$ such that $\chi_B(\Tilde{D}[M])\leq n+1$ and $\rho(x,M)\leq 1$ for all $x$ where $\rho$ is the directed distance on $D$. Consider the induced subgraph of the complement $D'=D[(V(D)\backslash M)]$, the subgraph has bounded out-degree $n-1$, by inductive hypothesis applied to $D'$, it has Borel chromatic number $\chi_B(\Tilde D')=\infty$ or $\chi_B(\Tilde D')\leq\frac{n(n+1)}{2}$, now we can color $M$ by $n+1$ many new different colors. This completes the induction.
\end{proof}

\section{Miscellaneous}\label{sec:5}
During the submission of this paper, the author of this paper proved that for any finite digraph $D$ with out-bounded degree $\Delta^+(D)=\Delta^+>1$, there is a $\Delta^+$-partition such that the induced subgraph of each partition is odd-dicycle-free. As a corollary, the digraph has a $\Delta^+$-partite domination. So, Theorem \ref{Dominate} can be improved for finite graphs. But we still don't know if it can be improved for Borel graphs. Also, there is an example showing that it is sharp for finite graphs when $n=3$, Paley tournament of $7$ vertices,

$$V(\text{PTr}(7))=\mathbb Z_7,xy\in A(\text{PTr}(7))\Longleftrightarrow y-x\equiv1\text{ or }2\text{ or }4(\text{mod }7).$$

If $M$ is a domination in $\text{PTr}(7)$, i.e., $\rho(x,M)\leq 1$ for all $x$, then $|M|\geq 3$.

\begin{question}
    Let $D$ be a locally countable Borel directed graph with finite Borel chromatic number $\chi_B(\Tilde{D})<\infty$ and with bounded out degree $2$, is there a Borel set $M$ such that $\chi_B(\Tilde{D}[M])\leq 2$ and $\rho(x,M)\leq 1$ for all $x$ where $\rho$ is the directed distance on $D$?
\end{question}

If it is true, then it will improve the bound in \cite{Meehan}. The directed Schreier graph $\vec F(2^{\Gamma*\Delta})$ are potential candidates for counter-examples where $\Gamma,\Delta=\mathbb Z\text{ or }\mathbb Z_n$.

In \cite{Richardson}, Richardson proved that there is a kernel for any finite odd-dicycle-free graph. But it is not true in general for Borel graphs. For example, an easy ergodicity argument shows that there is no measurable kernel in the directed Schreier graph $\vec F(2^{\mathbb Z^2})$. For an another example, the directed Schreier graph $\vec F(2^{\mathbb F_n})$, or any Borel directed graph $D$ with Borel chromatic number $5$ and with bounded out-degree $2$ has no Borel kernel, assume towards a contradiction, there is a Borel kernel $A$, then the induced subgraph of the complement $D[(V(D)\backslash A)]$ has bounded out-degree $1$, and thus has Borel chromatic number less than or equal to $3$, so $\chi_B(\Tilde{D})\leq 4$, a contradiction. But this argument does not work for measure settings.

\begin{question}
    Is there a measurable kernel in the directed Schreier graph $\vec F(2^{\mathbb F_n})$?
\end{question}

Similar to proposition \ref{SingleFunction}, one can show that the Borel graph generated by a 2-1 Borel function has a $4$ proper edge-coloring, but we do not know whether this bound can be improved.

\begin{question}
    Is there a 2-to-1 Borel function such that the Borel graph generated by it has Borel edge chromatic number $4$?
\end{question}

\begin{acknowledgments}
The author would like to thank Cecelia Higgins for reading early drafts of this paper and helpful discussions. This paper is partially selected from the author's PhD thesis, the author would like to thank co-advisors Longyun Ding and Su Gao for many years of advice.
\end{acknowledgments}

\end{document}